\newtheorem{theorem}{Theorem}
\newtheorem{corollary}{Corollary}
\newtheorem{Proposition}{Proposition}
\begin{document}
\author{D. Baramidze}
\title[$T$ means]{Sharp $\left( H_{p},L_{p}\right) $ and $\left( H_{p},\text{weak}-L_{p}\right) $ type
inequalities of weighted maximal operators of $T$ means with respect to Vilenkin systems}
\address{D. Baramidze, The University of Georgia, School of science and
	technology, 77a Merab Kostava St, Tbilisi 0128, Georgia and Department of
	Computer Science and Computational Engineering, UiT - The Arctic University
	of Norway, P.O. Box 385, N-8505, Narvik, Norway.}
\email{davit.baramidze@ug.edu.ge }
\thanks{The research was supported by Shota Rustaveli National Science Foundation grant no. PHDF-21-1702.}
\date{}
\maketitle

\begin{abstract}
We  discuss  $\left( H_{p},L_{p}\right) $ and $\left( H_{p},\text{weak}-L_{p}\right) $
type inequalities of weighted maximal operators of  $T$ means with respect to the Vilenkin systems with monotone coefficients, considered in \cite{tut4} and prove that these results are the best possible in a special sense.  As applications, both some well-known and new results are pointed out.
\end{abstract}

\bigskip \textbf{2000 Mathematics Subject Classification.} 42C10, 42B25.

\textbf{Key words and phrases:} Vilenkin groups,  Vilenkin systems, partial sums of Vilenkin-Fourier series, $T$ means, Vilenkin-Nörlund means, Fejér mean, Riesz means,  martingale Hardy spaces, $L_{p}$ spaces, $weak-L_{p}$ spaces, maximal operator, strong convergence, inequalities.

\section{Introduction}

The definitions and notations used in this introduction can be
found in our next Section. 

It is well-known that Vilenkin systems do not form bases in the space $L_{1}$. Moreover, there is a function in the Hardy space $H_p$, such that the
partial sums of $f$ are not bounded in $L_p$-norm, for $0<p\leq 1.$ Approximation properties of Vilenkin-Fourier series with respect to one- and two-dimensional cases can be found in Persson,  Tephnadze and Wall \cite{PTW2},  Simon \cite{Si4}, Blahota \cite{b} and Gát \cite{Ga1}, Tephnadze \cite{tep7,tep8,tep9}, Tutberidze \cite{tut1} (see also \cite{MST}). 
In the one-dimensional case the weak (1,1)-type inequality for the maximal operator of Fejér means 
$$
\sigma ^{\ast }f:=\sup_{n\in \mathbb{N}}\left\vert \sigma _{n}f\right\vert
$$
can be found in Schipp \cite{Sc} for Walsh series and in Pál, Simon \cite{PS}
for bounded Vilenkin series. Fujji \cite{Fu} and Simon \cite{Si2} verified
that $\sigma ^{\ast }$ is bounded from $H_{1}$ to $L_{1}$. Weisz \cite{We2}
generalized this result and proved boundedness of $\ \sigma ^{\ast }$ \ from
the martingale space $H_{p}$ to the space $L_{p},$ for $p>1/2$. Simon \cite%
{Si1} gave a counterexample, which shows that boundedness does not hold for $%
0<p<1/2.$ A counterexample for $p=1/2$ was given by Goginava \cite{Go} (see
also Tephnadze \cite{tep1}). Moreover, Weisz \cite{We4} proved that the maximal
operator of the Fejér means $\sigma ^{\ast }$ is bounded from the Hardy
space $H_{1/2}$ to the space $weak-L_{1/2}$. In \cite{tep2} and \cite{tep3} 
the following result  was proved:

\textbf{Theorem T1:} Let $0<p\leq 1/2.$ Then the weighted maximal operator of Fejér means $\widetilde{\sigma}_p^{\ast }$ defined by
\begin{equation*}
\widetilde{\sigma}_p^{\ast }f:=\sup_{n\in \mathbb{N}_+}\frac{\left\vert\sigma_n f \right\vert}{\left(n+1\right)^{1/p-2}\log ^{2\left[1/2+p\right]} \left( n+1\right) }
\end{equation*}
is bounded from the martingale Hardy space $H_{p}$ to the Lebesgue space $L_{p}$. 

Moreover, the rate of the weights 
$\left\{ 1/\left( n+1\right)^{1/p-2}\log ^{2\left[ p+1/2\right] }\left( n+1\right) \right\}_{n=1}^{\infty }$ 
in $n$-th Fejér mean was given exactly. 

In \cite{tep6} (see also \cite{BPTW} and \cite{lptt}) it was proved that the maximal operator of Riesz means 
$$R^{\ast}f:=\sup_{n\in\mathbb{N}}\left\vert R_{n}f\right\vert$$  
is bounded from the Hardy space $H_{1/2}$ to the space $weak-L_{1/2}$ and is not bounded from $H_{p}$ to the space $L_{p},$ for $0<p\leq 1/2.$ There was also proved that Riesz summability has better properties than Fejér means. In particular, the following weighted maximal operators  $$\frac{\log n \vert R_n f \vert }{\left( n+1\right) ^{1/p-2}\log ^{2\left[ 1/2+p\right] }\left( n+1\right) }$$ are bounded from $H_{p}$ to the space $L_{p},$ for $0<p\leq 1/2$ and the rate of weights are sharp. 

Similar results with respect to Walsh-Kachmarz systems were obtained in \cite{gn} for $p=1/2$ and in \cite{tep4} for $0<p<1/2.$ Approximation properties of Fejér means  with respect to Vilenkin and Kaczmarz systems can be found in Tephnadze \cite{tep5}, Tutberidze \cite{tut2}, Persson, Tephnadze and Tutberidze \cite{PTT1}, Blahota and Tephnadze \cite{bt1} and Persson, Tephnadze, Tutberidze and Wall \cite{PTTW1}, Gogolashvili,  Nagy and Tephnadze \cite{gnt} and Persson, Tephnadze and Wall \cite{PTW3}.

Móricz and Siddiqi \cite{Mor} investigated the approximation properties of
some special Nörlund means of Walsh-Fourier series of $L_{p}$ function in
norm. In the two-dimensional case approximation properties of Nörlund
means were considered by Nagy \cite{nag,n,nagy} (see also \cite{NT1,NT2,NT3,NT4}). In \cite{ptw} (see also \cite{BT2}, \cite{MPT} and \cite{PTW3}) it was proved that the maximal operators of Nörlund means $t^{\ast }$ defined by
$$t^{\ast }f:=\sup_{n\in \mathbb{N} }\left\vert t_{n}f\right\vert,$$ 
either with non-decreasing coefficients, or non-increasing coefficients, satisfying the condition 
\begin{equation} \label{cond0}
\frac{1}{Q_{n}}=O\left(\frac{1}{n}\right),\text{ \ \ as \ \ }
n\rightarrow \infty 
\end{equation}
 are bounded from the Hardy space $H_{1/2}$
to the space $weak-L_{1/2}$ and are not bounded from
the Hardy space $H_{p}$ to the space $L_{p},$ when $0<p\leq 1/2.$

In \cite{tut3} was proved that the maximal operators  $T^{\ast }$ of $T$ means  defined by
$$T^{\ast }f:=\sup_{n\in \mathbb{N}}\left\vert T_{n}f\right\vert$$ 
either with non-increasing coefficients, or non-decreasing sequence satisfying condition 
\begin{equation}
\frac{q_{n-1}}{Q_{n}}=O\left( \frac{1}{n}\right) ,\text{ \ \ as \ \ }\
n\rightarrow \infty, \label{fn011}
\end{equation}%
are bounded from the Hardy space $H_{1/2}$
to the space $weak-L_{1/2}$. Moreover, there exists a martingale and such $T $ means for which boundedness from
the Hardy space $H_{p}$ to the space $L_{p}$ does not hold when $0<p\leq 1/2.$

In \cite{tut4} (see also  \cite{GT1,GT2}) was proved that if $T$ is either with non-increasing coefficients, or non-decreasing sequence satisfying condition \eqref{fn011} that  the weighted maximal operator of $T$ means $\widetilde{T}_p^{\ast }$ defined by
\begin{equation}\label{conddato0}
\widetilde{T}_p^{\ast }f:=\sup_{n\in \mathbb{N}_+}\frac{\left\vert T_n f \right\vert}{\left(n+1\right)^{1/p-2}\log ^{2\left[1/2+p\right]} \left( n+1\right) }
\end{equation}
is bounded from the martingale Hardy space $H_{p}$ to the Lebesgue space $L_{p}$.

Some general means related to $T$ means was investigated by Blahota and Nagy \cite{BN} (see also \cite{BNT}).

In this paper we  discuss  $\left( H_{p},L_{p}\right) $ and $\left( H_{p},\text{weak}-L_{p}\right) $
type inequalities of weighted maximal operators of  $T$ means with respect to the Vilenkin systems with monotone coefficients, considered in \cite{tut4} and prove that the rate of the weights in \eqref{conddato0} are the best possible in a special sense.  As applications, both some well-known and new results are pointed out.

This paper is organized as follows: In order not to disturb our discussions
later on some definitions and notations are presented in Section 2. The main
results with their proof and some of  consequences can be found in Section 3.

\section{Definitions and Notation}

Denote by $
\mathbb{N}
_{+}$ the set of the positive integers, $\mathbb{N}:=\mathbb{N}_{+}\cup \{0\}.$ Let $m:=(m_{0,}$ $m_{1},...)$ be a sequence of the positive
integers not less than 2. Denote by 
\begin{equation*}
Z_{m_{k}}:=\{0,1,...,m_{k}-1\}
\end{equation*}
the additive group of integers modulo $m_{k}$.

Define the group $G_{m}$ as the complete direct product of the groups $%
Z_{m_{i}}$ with the product of the discrete topologies of $Z_{m_{j}}`$s.

The direct product $\mu $ of the measures 
$
\mu _{k}\left( \{j\}\right) :=1/m_{k}\text{ \ \ \ }(j\in Z_{m_{k}})
$
is the Haar measure on $G_{m_{\text{ }}}$with $\mu \left( G_{m}\right) =1.$

In this paper we discuss bounded Vilenkin groups,\textbf{\ }i.e. the case
when $\sup_{n}m_{n}<\infty .$

The elements of $G_{m}$ are represented by sequences 
\begin{equation*}
x:=\left( x_{0},x_{1},...,x_{j},...\right) ,\ \left( x_{j}\in
Z_{m_{j}}\right) .
\end{equation*}

Set $e_{n}:=\left( 0,...,0,1,0,...\right) \in G_m,$ the $n$-th coordinate of
which is 1 and the rest are zeros $\left( n\in \mathbb{N}\right) .$
It is easy to give a basis for the neighborhoods of $G_{m}:$ 
\begin{equation*}
I_{0}\left( x\right) :=G_{m},\text{ \ }I_{n}(x):=\{y\in G_{m}\mid
y_{0}=x_{0},...,y_{n-1}=x_{n-1}\},
\end{equation*}%
where $x\in G_{m},$ $n\in \mathbb{N}.$

%If we define $I_{n}:=I_{n}\left( 0\right) ,$\ for \ $n\in \mathbb{N}$ and $\ 
%\overline{I_{n}}:=G_{m}$ $\backslash $ $I_{n},$ then%
%\begin{equation}
%\overline{I_{N}}=\left( \overset{N-2}{\underset{k=0}{\bigcup }}\overset{N-1}{%
%\underset{l=k+1}{\bigcup }}I_{N}^{k,l}\right) \bigcup \left( \underset{k=1}{%
%\bigcup\limits^{N-1}}I_{N}^{k,N}\right) ,  \label{1.1}
%\end{equation}%
%where 
%\begin{equation*}
%I_{N}^{k,l}:=\left\{ 
%\begin{array}{l}
%\text{ }I_{N}(0,...,0,x_{k}\neq 0,0,...,0,x_{l}\neq 0,x_{l+1\text{ }%
%},...,x_{N-1\text{ }},...),\text{ \ for }k<l<N, \\ 
%\text{ }I_{N}(0,...,0,x_{k}\neq 0,0,...,,x_{N-1\text{ }}=0,\text{ }x_{N\text{
%}},...),\text{ \ \ \ \ \ \ \ \ \ \ \ \ \ \ \ for }l=N.%
%\end{array}%
%\text{ }\right.
%\end{equation*}

If we define the so-called generalized number system based on $m$
in the following way : 
\begin{equation*}
M_{0}:=1,\ M_{k+1}:=m_{k}M_{k}\,\,\,\ \ (k\in 
%TCIMACRO{\U{2115} }%
%BeginExpansion
\mathbb{N}
%EndExpansion
),
\end{equation*}%
then every $n\in 
%TCIMACRO{\U{2115} }%
%BeginExpansion
\mathbb{N}
%EndExpansion
$ can be uniquely expressed as $n=\sum_{j=0}^{\infty }n_{j}M_{j},$ where $%
n_{j}\in Z_{m_{j}}$ $(j\in 
%TCIMACRO{\U{2115} }%
%BeginExpansion
\mathbb{N}
%EndExpansion
_{+})$ and only a finite number of $n_{j}`$s differ from zero.

We introduce on $G_{m}$ an orthonormal system which is called the Vilenkin
system. At first, we define the complex-valued function $r_{k}\left(
x\right) :G_{m}\rightarrow 
%TCIMACRO{\U{2102} }%
%BeginExpansion
\mathbb{C}
%EndExpansion
,$ the generalized Rademacher functions, by%
\begin{equation*}
r_{k}\left( x\right) :=\exp \left( 2\pi ix_{k}/m_{k}\right) ,\text{ }\left(
i^{2}=-1,x\in G_{m},\text{ }k\in 
%TCIMACRO{\U{2115} }%
%BeginExpansion
\mathbb{N}
%EndExpansion
\right) .
\end{equation*}

Next, we define the Vilenkin system$\,\,\,\psi :=(\psi _{n}:n\in 
%TCIMACRO{\U{2115} }%
%BeginExpansion
\mathbb{N}
%EndExpansion
)$ on $G_{m}$ by: 
\begin{equation*}
\psi _{n}(x):=\prod\limits_{k=0}^{\infty }r_{k}^{n_{k}}\left( x\right)
,\,\,\ \ \,\left( n\in 
%TCIMACRO{\U{2115} }%
%BeginExpansion
\mathbb{N}
%EndExpansion
\right) .
\end{equation*}

Specifically, we call this system the Walsh-Paley system when $m\equiv 2.$

The norms (or quasi-norms) of the spaces $L_{p}(G_{m})$ and $%
weak-L_{p}\left( G_{m}\right) $ $\left( 0<p<\infty \right) $ are
respectively defined by 
\begin{equation*}
\left\Vert f\right\Vert _{p}^{p}:=\int_{G_{m}}\left\vert f\right\vert
^{p}d\mu ,\text{ }\left\Vert f\right\Vert _{weak-L_{p}}^{p}:=\underset{%
\lambda >0}{\sup }\lambda ^{p}\mu \left( f>\lambda \right) <+\infty .
\end{equation*}%

The Vilenkin system is orthonormal and complete in $L_{2}\left( G_{m}\right) 
$ (see \cite{Vi}).

Now, we introduce analogues of the usual definitions in Fourier-analysis. If 
$f\in L_{1}\left( G_{m}\right) $ we can define Fourier coefficients, partial
sums and Dirichlet kernels with respect to the Vilenkin system in the usual manner: 
\begin{equation*}
\widehat{f}\left( n\right) :=\int_{G_{m}}f\overline{\psi }_{n}d\mu,\ \ \ \
\ \ \
S_{n}f:=\sum_{k=0}^{n-1}\widehat{f}\left( k\right) \psi _{k},\text{ \ \ }%
D_{n}:=\sum_{k=0}^{n-1}\psi _{k\text{ }},\text{ \ \ }\left( n\in 
\mathbb{N}_{+}\right).
\end{equation*}

Let $\{q_{k}:k\geq 0\}$ be a sequence of non-negative numbers. The $n$-th  $T$ means $T_n$ for a Fourier series of $f$ are defined by
\begin{equation} \label{nor}
T_nf:=\frac{1}{Q_n}\overset{n-1}{\underset{k=0}{\sum }}q_{k}S_kf, \ \ \ \text{where} \ \ \  Q_{n}:=\sum_{k=0}^{n-1}q_{k}.
\end{equation}

We always assume that $\{q_k:k\geq 0\}$ is a sequence of non-negative numbers and $q_0>0.$ Then the summability method (\ref{nor}) generated by $\{q_k:k\geq 0\}$ is regular if and only if $\lim_{n\rightarrow\infty}Q_n=\infty.$

Let $\{q_{k}:k\geq 0\}$ be a sequence of nonnegative numbers. The $n$-th Nörlund mean $t_{n}$ for a Fourier series of $f$ \ is defined by 
\begin{equation} \label{nor0}
t_{n}f=\frac{1}{Q_{n}}\overset{n}{\underset{k=1}{\sum }}q_{n-k}S_{k}f, \ \ \ \text{where } \ \  \ Q_{n}:=\sum_{k=0}^{n-1}q_{k}.
\end{equation}%

If $q_k\equiv 1$ in \eqref{nor} and \eqref{nor0} we respectively define the Fejér means $\sigma _{n}$ and
Kernels $K_{n}$ as follows: 
\begin{equation*}
\sigma _{n}f:=\frac{1}{n}\sum_{k=1}^{n}S_{k}f\,,\text{ \ \ }K_{n}:=\frac{1}{n%
}\sum_{k=1}^{n}D_{k}.
\end{equation*}

The well-known example of N\"orlund summability is the so-called $\left(C,\alpha\right)$ mean (Ces\`aro means) for $0<\alpha<1,$ which are defined by
\begin{equation*}
\sigma_n^{\alpha}f:=\frac{1}{A_n^{\alpha}}\overset{n}{\underset{k=1}{\sum}}A_{n-k}^{\alpha-1}S_kf, 
\end{equation*}
where 
$$A_0^{\alpha}:=0,\qquad A_n^{\alpha}:=\frac{\left(\alpha+1\right)...\left(\alpha+n\right)}{n!}.$$

We also consider the "inverse" $\left(C,\alpha\right)$ means, which is an example of $T$ means:
\begin{equation*}
U_n^{\alpha}f:=\frac{1}{A_n^{\alpha}}\overset{n-1}{\underset{k=0}{\sum}}A_{k}^{\alpha-1}S_kf, \qquad 0<\alpha<1.
\end{equation*}

Let $V_n^{\alpha}$ denote
the $T$ mean, where $	\left\{q_0=0, \  q_k=k^{\alpha-1}:k\in \mathbb{N}_+\right\} ,$
that is 
\begin{equation*}
V_n^{\alpha}f:=\frac{1}{Q_n}\overset{n-1}{\underset{k=1}{\sum }}k^{\alpha-1}S_kf,\qquad 0<\alpha<1.
\end{equation*}

The $n$-th Riesz logarithmic mean $R_{n}$ and the Nörlund logarithmic mean
$L_{n}$ are defined by
\begin{equation*}
R_{n}f:=\frac{1}{l_{n}}\sum_{k=1}^{n-1}\frac{S_{k}f}{k}\text{ \ \ \ and  \ \ \ }
L_{n}f:=\frac{1}{l_{n}}\sum_{k=1}^{n-1}\frac{S_{k}f}{n-k},
\end{equation*}%
\ respectively, where $l_{n}:=\sum_{k=1}^{n-1}1/k.$

If $\{q_k:k\in\mathbb{N}\}$ is monotone and bounded sequence,
then we get the class $B_n$ of $T$ means with non-decreasing coefficients:
\begin{equation*}
B_nf:=\frac{1}{Q_n}
\sum_{k=1}^{n-1}q_kS_kf.
\end{equation*}

The $\sigma $-algebra generated by the intervals $\left\{ I_{n}\left(
x\right) :x\in G_{m}\right\} $ will be denoted by 
$\digamma _{n}\left( n\in 
\mathbb{N}\right) .$ Denote by 
$f=\left( f^{\left( n\right) },n\in \mathbb{N}\right) $ 
a martingale with respect to 
$\digamma _{n}\left( n\in \mathbb{N}\right) .$ (for details see e.g. \cite{We1}).
The maximal function of a martingale $f$ \ is defined by 
$
f^{\ast }:=\sup_{n\in \mathbb{N}}\left\vert f^{(n)}\right\vert .
$
For $0<p<\infty $ \ the Hardy martingale spaces $H_{p}$ consist of all
martingales $f$ for which 
\begin{equation*}
\left\Vert f\right\Vert _{H_{p}}:=\left\Vert f^{\ast }\right\Vert
_{p}<\infty .
\end{equation*}

If $f=\left( f^{\left( n\right) },n\in \mathbb{N}\right) $ is a martingale, then the Vilenkin-Fourier coefficients must be
defined in a slightly different manner: 
\begin{equation*}
\widehat{f}\left( i\right) :=\lim_{k\rightarrow \infty
}\int_{G_{m}}f^{\left( k\right) }\overline{\psi}_id\mu .
\end{equation*}

A bounded measurable function $a$ is called a p-atom, if there exists an interval $I$, such that
\begin{equation*}
\int_{I}ad\mu =0,\text{ \ \ }\left\Vert a\right\Vert _{\infty }\leq \mu
\left( I\right) ^{-1/p},\text{ \ \ supp}\left( a\right) \subset I.
\end{equation*}

We need the following auxiliary Lemmas:

\begin{Proposition}[see e.g. \protect\cite{We3}]
	\label{lemma2.1} A martingale $f=\left( f^{\left( n\right) },n\in \mathbb{N}\right) $ is in $H_{p}\left( 0<p\leq 1\right) $ if and only if there exists
	a sequence $\left( a_{k},k\in 
	\mathbb{N}
	\right) $ of p-atoms and a sequence $\left( \mu _{k},k\in \mathbb{N}
	\right) $ of real numbers such that, for every $n\in \mathbb{N},$ 
	\begin{equation} \label{1}
	\qquad \sum_{k=0}^{\infty }\mu _{k}S_{M_{n}}a_{k}=f^{\left( n\right) },\text{
		\ \ a.e.,}   \ \ \ \ \text{where} \ \ \ \ \sum_{k=0}^{\infty }\left\vert \mu _{k}\right\vert ^{p}<\infty . 
	\end{equation}
	Moreover,
	$$
	\left\Vert f\right\Vert _{H_{p}}\backsim \inf \left( \sum_{k=0}^{\infty
	}\left\vert \mu _{k}\right\vert ^{p}\right) ^{1/p} ,
	$$
	where the infimum is taken over all decompositions of $f$ \textit{%
		of the form} (\ref{1}).
\end{Proposition}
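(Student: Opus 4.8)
The statement is the standard atomic characterization of the martingale Hardy space $H_p$, and the plan is to prove the two implications separately and then recover the quasi-norm equivalence by keeping track of the constants.

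\emph{The sufficiency part.} Assume $f^{(n)}=\sum_k\mu_kS_{M_n}a_k$ with each $a_k$ a $p$-atom and $\sum_k|\mu_k|^p<\infty$. Since the Dirichlet kernel $D_{M_n}$ equals $M_n$ on $I_n(x)$ and vanishes elsewhere, $S_{M_n}$ is the conditional expectation with respect to $\digamma_n$. First I would check that for a single $p$-atom $a$ with $\mathrm{supp}\,a\subseteq I=I_N(x)$ one has $S_{M_n}a=0$ whenever $n\le N$ (the average of $a$ over any $I_n$-atom is $0$ by the mean-zero condition) and $\mathrm{supp}\,S_{M_n}a\subseteq I$ whenever $n>N$, while $|S_{M_n}a|\le\|a\|_\infty\le\mu(I)^{-1/p}$ because conditional expectation is an $L_\infty$-contraction. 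Hence the maximal function $a^{\ast}:=\sup_n|S_{M_n}a|$ is supported in $I$ and $\|a^{\ast}\|_p^p\le\mu(I)\cdot\mu(I)^{-1}=1$. Using $f^{\ast}\le\sum_k|\mu_k|\,a_k^{\ast}$ together with the fact that $0<p\le1$ makes $\|\cdot\|_p^p$ subadditive, I obtain $\|f\|_{H_p}^p=\|f^{\ast}\|_p^p\le\sum_k|\mu_k|^p\|a_k^{\ast}\|_p^p\le\sum_k|\mu_k|^p$, so $f\in H_p$ and $\|f\|_{H_p}\le(\sum_k|\mu_k|^p)^{1/p}$.

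\emph{The necessity part.} Given $f\in H_p$, hence $f^{\ast}\in L_p$, I would produce the decomposition by the stopping-time technique. Set $\tau_k:=\inf\{n\in\mathbb N:|f^{(n)}|>2^k\}$, $k\in\mathbb Z$; then $\{\tau_k<\infty\}=\{f^{\ast}>2^k\}$ has finite measure and, being in the generated $\sigma$-algebra, decomposes into a disjoint union of maximal $\digamma$-atoms $\{\tau_k<\infty\}=\bigsqcup_j I_j^k$. Since $f^{\ast}<\infty$ a.e., the stopped martingales $f^{\tau_k}$ converge to $f$ as $k\to\infty$, so that $f$ is the telescoping sum $\sum_k(f^{\tau_{k+1}}-f^{\tau_k})$; splitting each increment over the atoms $I_j^k$ I set
\[
\mu_{k,j}:=c\,2^{k}\mu(I_j^k)^{1/p},\qquad a_{k,j}:=\mu_{k,j}^{-1}\,1_{I_j^k}\bigl(f^{\tau_{k+1}}-f^{\tau_k}\bigr)
\]
for a suitable absolute constant $c$, relabelling the countable family $\{(k,j)\}$ by $\mathbb N$. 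One then verifies that every $a_{k,j}$ is a $p$-atom: it is supported in $I_j^k$; it has vanishing integral over $I_j^k$ (the function $1_{I_j^k}(f^{\tau_{k+1}}-f^{\tau_k})$ is a martingale that equals $0$ at the scale of $I_j^k$); and it satisfies $|a_{k,j}|\le\mu(I_j^k)^{-1/p}$, i.e.\ $|f^{\tau_{k+1}}-f^{\tau_k}|\lesssim2^k$ on $I_j^k$, which is read off from the definitions of $\tau_k,\tau_{k+1}$ together with the regularity of the Vilenkin filtration $\mu(I_{n-1})/\mu(I_n)=m_{n-1}\le\sup_i m_i<\infty$. Finally $\sum_{k,j}|\mu_{k,j}|^p=c^p\sum_k2^{kp}\mu(\{f^{\ast}>2^k\})\simeq\|f^{\ast}\|_p^p=\|f\|_{H_p}^p$ by the standard comparison of the $L_p$-quasi-norm with the dyadic sum of its distribution function; this supplies the reverse quasi-norm estimate and completes the equivalence.

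The routine ingredients are the measurability of the stopping times, the telescoping convergence of $f^{\tau_k}$ to $f$ (with the separate and standard handling of the constant term $f^{(0)}$), and the layer-cake estimate at the very end. The genuine obstacle is the $L_\infty$ bound on the pieces $1_{I_j^k}(f^{\tau_{k+1}}-f^{\tau_k})$: on an arbitrary filtration a martingale can overshoot the level $2^k$ at the stopping time $\tau_k$ by an uncontrolled amount, and it is precisely the boundedness of the Vilenkin sequence, $\sup_i m_i<\infty$ — equivalently, the $R$-regularity of the underlying dyadic-type structure — that forces this overshoot to be comparable to $2^k$ and thereby turns the pieces into genuine $p$-atoms with the required normalization.
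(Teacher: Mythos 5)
The paper does not prove this Proposition at all --- it is quoted from Weisz \cite{We3} as a known auxiliary result --- so your attempt has to be judged on its own. Your sufficiency direction is correct and complete: $S_{M_n}$ is indeed the conditional expectation with respect to $\digamma_n$, the maximal function of a single $p$-atom is supported in its interval and bounded by $\mu(I)^{-1/p}$, and the $p$-subadditivity of $\Vert\cdot\Vert_p^p$ finishes that half together with one inequality of the norm equivalence.

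The necessity direction, however, has a genuine gap, and it sits exactly at the point you yourself single out as ``the genuine obstacle.'' The stopping time $\tau_k:=\inf\{n:|f^{(n)}|>2^k\}$ is not predictable, and the claim that regularity of the Vilenkin filtration forces $|f^{\tau_{k+1}}-f^{\tau_k}|\lesssim 2^k$ on $I_j^k$ is false. Regularity controls the ratio $\mu(I_{n-1}(x))/\mu(I_n(x))\le\sup_i m_i$, but it does not prevent the martingale from jumping from a small value to an arbitrarily large one in a single step: take $m\equiv 2$, $f^{(0)}=0$ and $f^{(1)}=\pm A$ on the two halves of $G_2$; this is a martingale on a regular filtration, $\tau_k=1$ everywhere once $A>2^k$, and the stopped martingale $f^{\tau_k}=f^{(1)}$ has modulus $A$, which is not $O(2^k)$. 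The cancellation among the $m_{n-1}$ subatoms is precisely what lets $|f^{(n-1)}|$ stay below the threshold while $|f^{(n)}|$ explodes, so your pieces $\mu_{k,j}^{-1}1_{I_j^k}(f^{\tau_{k+1}}-f^{\tau_k})$ need not satisfy the $L_\infty$ normalization of a $p$-atom. The standard repair (and what Weisz actually does) is to stop \emph{predictably}: one builds a nondecreasing sequence $(\lambda_n)$ with $\lambda_n$ being $\digamma_{n-1}$-measurable and $|f^{(n)}|\le\lambda_n$, and sets $\nu_k:=\inf\{n:\lambda_{n+1}>2^k\}$, so that on $\{\nu_k=n\}$ the stopped martingale only ever sees values $|f^{(m)}|\le\lambda_m\le\lambda_n\le 2^k$ for $m\le n$ and the overshoot occurs strictly after the stopping time. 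Regularity then enters not to bound the overshoot but to produce such a predictable majorant with $\Vert\sup_n\lambda_n\Vert_p\lesssim\Vert f^{\ast}\Vert_p$ (this is the equivalence $H_p^{\ast}=P_p$ for regular filtrations, e.g.\ via $|f^{(n)}|\le R\,E(|f^{(n)}|\mid\digamma_{n-1})$ combined with a distribution-function comparison); that construction and estimate are the missing substantive steps, and without them the final count $\sum_{k,j}|\mu_{k,j}|^p\simeq\Vert f\Vert_{H_p}^p$ is not justified. The telescoping, the mean-zero property of the increments over the maximal intervals, and the layer-cake computation at the end are all fine once the stopping time is corrected.
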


\section{The Main Results and Applications}

Our first main result reads:

\begin{theorem}\label{Thtmeansdiv2} 	a) Let sequence $\{q_{k}:k\geq 0\}$ is nondecreasing, satisfying condition
	\begin{equation} \label{kachzcond1}
	\frac{q_{0}}{Q_{M_{2n_{k}}+2}}\geq \frac{c}{M_{2n_{k}}}, \ \ \ \text{for some constant} \ \ c\ \ \ \text{and}\ \ \ n\in\mathbf{N}.
	\end{equation}	
	or let sequence $\{q_{k}:k\geq 0\}$ is nonincreasing, satisfying condition
	\begin{equation} \label{kachzcond2} 
	\frac{q_{M_{2n_{k}}+1}}{Q_{M_{2n_{k}}+2}}\geq \frac{c}{M_{2n_{k}}}, \ \ \ \text{for some constant} \ \ c\ \ \ \text{and}\ \ \ n\in\mathbf{N}.
	\end{equation}	
	Then for any increasing function $\varphi : \mathbf{N}_{+}\rightarrow [1,$ $\infty )$ 
	satisfying the conditions $$\lim_{n\to\infty}\varphi(n)=\infty$$ and	
	\begin{equation}\label{conddato1}
	\overline{\lim_{n\rightarrow \infty }}\frac{\log ^{2}\left( n+1\right) }{%
		\varphi \left( n+1\right) }=+\infty .
	\end{equation}
	Then there exists a martingale $f\in H_{1/2},$ such that
	\begin{equation*}
	\left\| \sup_{n\in \mathbf{N}}\frac{\vert T_{n}f\vert}{\varphi \left( n\right) }\right\|
	_{1/2}=\infty .
	\end{equation*}
	
	b) Let $0<p<1/2$ and sequence $\{q_{k}:k\geq 0\}$ is nondecreasing, or let sequence $q_k$ is nonincreasing, satisfying condition \eqref{kachzcond2}.
	Then for any increasing function $\varphi : \mathbf{N}_{+}\rightarrow [1,$ $\infty )$ 
	satisfying the condition	
	\begin{equation}\label{conddato2}
	\overline{\lim_{n\rightarrow \infty }}\frac{\left( n+1\right)^{1/p-2} }{%
		\varphi \left( n+1\right) }=+\infty,
	\end{equation}
	there exists a martingale $f\in H_{p},$ such that
	\begin{equation*}
	\left\| \sup_{n\in \mathbf{N}}\frac{\vert T_{n}f\vert }{\varphi \left( n\right) }\right\|_{\text{weak}-L_{p}}=\infty .
	\end{equation*}
\end{theorem}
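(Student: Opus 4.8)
The plan is to construct, in each part, an explicit counterexample martingale obtained as a lacunary sum of (normalized) $p$-atoms supported on small intervals $I_{M_{2n_k}}$, following the standard scheme used in \cite{tep2,tep6,tut3}. Concretely, pick a rapidly increasing subsequence $\{n_k\}$ (sparse enough that $\sum_k 1$-type tail estimates converge and that condition \eqref{conddato1} resp. \eqref{conddato2} is witnessed along $\{n_k\}$), and set $f := \sum_{k} \lambda_k a_k$ where $a_k$ is (a constant multiple of) $\left(D_{M_{2n_k+1}} - D_{M_{2n_k}}\right)$, so that $a_k$ is a $p$-atom on $I_{M_{2n_k}}$ up to normalization, with coefficients $\lambda_k$ chosen so that $\sum_k |\lambda_k|^p < \infty$ (hence $f\in H_p$ by Proposition \ref{lemma2.1}) but $\lambda_k$ as large as the constraint allows, typically $\lambda_k \sim M_{2n_k}^{1/p-2}/$(slowly growing factor). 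The Vilenkin-Fourier coefficients $\widehat f(j)$ are then explicit, supported in the blocks $[M_{2n_k}, M_{2n_k+1})$.

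Next I would estimate $T_n f$ for a carefully chosen index $n = n^{(k)}$ near $M_{2n_k}$ — e.g.\ $n = M_{2n_k} + 2$ or a similar index for which the partial sums $S_j f$ with $j$ in the active block contribute coherently. Here the monotonicity hypothesis on $\{q_k\}$ together with the regularity-type lower bounds \eqref{kachzcond1} (nondecreasing case) or \eqref{kachzcond2} (nonincreasing case) is exactly what guarantees that the single term $q_{0}S_{M_{2n_k}}f/Q_{n}$ (resp.\ $q_{M_{2n_k}+1}S_{M_{2n_k}+1}f/Q_n$) is not swamped by the other terms in $T_n f = Q_n^{-1}\sum q_j S_j f$: one shows $|T_n f| \geq c\, q_{0}|S_{M_{2n_k}+1}f - S_{M_{2n_k}}f|/Q_n \geq (c/M_{2n_k})|\widehat f(M_{2n_k})\psi_{M_{2n_k}}|$ on a set of measure $\sim 1$, and since $|\widehat f(M_{2n_k})| \sim \lambda_k \|a_k\|_\infty \sim \lambda_k M_{2n_k}^{1/p}$, this forces $|T_{n^{(k)}}f| \gtrsim \lambda_k M_{2n_k}^{1/p-1}$ on a non-negligible set. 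Dividing by $\varphi(n^{(k)}) \leq \varphi(M_{2n_k+1})$ and using \eqref{conddato1} (part a, where also the $\log^2$ factor must be tracked because $p=1/2$ is the critical exponent) resp.\ \eqref{conddato2} (part b), one sees the ratio is unbounded along $k$.

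Finally, for part (a) I would take $L_{1/2}$-quasinorms and, for part (b), estimate the weak-$L_p$ quasinorm via $\lambda^p \mu(\sup_n |T_n f|/\varphi(n) > \lambda)$ evaluated at $\lambda \sim \lambda_k M_{2n_k}^{1/p-1}/\varphi(M_{2n_k+1}) \to \infty$ on a set of measure bounded below by a constant, which blows up; the blow-up of the full supremum then follows since each $f^{(N)}$ with $N$ large agrees with the tested partial data. The main obstacle I expect is the sharp bookkeeping in the nondecreasing case: when $\{q_k\}$ is merely nondecreasing and bounded there is no lower bound like \eqref{kachzcond1} available for free, which is precisely why that hypothesis is imposed in part (a) but can be dropped in part (b) for $0<p<1/2$ — there the cruder growth $\lambda_k M_{2n_k}^{1/p-2}$ against the weaker weight $\varphi$ with $(n+1)^{1/p-2}/\varphi(n+1)$ unbounded suffices, and one must verify that an arbitrary nondecreasing bounded $\{q_k\}$ still produces the needed coherent lower bound for $|T_n f|$ at some index, using only $q_0>0$ and $Q_n \leq n q_{n-1} \leq C n$. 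Separating these two regimes cleanly, and handling the extra logarithmic factor at the endpoint $p=1/2$, is where the care is needed; the atom construction and the $H_p$-membership are routine.
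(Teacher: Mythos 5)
Your outline for part (b) matches the paper's argument: one tests the single index $n=M_{2n_k}+2$, observes that $S_jf_{n_k}=0$ for $j\le M_{2n_k}$ so that $T_{M_{2n_k}+2}f_{n_k}=\frac{q_{M_{2n_k}+1}}{Q_{M_{2n_k}+2}}\psi_{M_{2n_k}}$ has constant modulus $\ge c/M_{2n_k}$ on \emph{all} of $G_m$ (there is nothing to be ``swamped'' by --- the other terms vanish identically), and the weak-$L_p$ ratio against $\Vert f_{n_k}\Vert_{H_p}=M_{2n_k}^{1-1/p}$ blows up like $M_{2n_k}^{1/p-2}/\varphi$. That part is fine, modulo the routine bookkeeping of cross-block interference if you insist on assembling a single martingale $f=\sum_k\lambda_ka_k$ rather than, as the paper does, exhibiting a sequence $f_{n_k}$ with unbounded ratio.

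The genuine gap is in part (a). At the critical exponent $p=1/2$ your mechanism --- one coherent index near $M_{2n_k}$ and a set of measure $\sim 1$ --- yields only $|T_{M_{2n_k}+2}f_{n_k}|\ge c/M_{2n_k}$, hence a ratio $\Vert T_nf_{n_k}/\varphi(n)\Vert_{1/2}/\Vert f_{n_k}\Vert_{H_{1/2}}\ge c/\varphi$, which is \emph{bounded} (indeed tends to $0$ since $\varphi\to\infty$); the exponent $1/p-2$ vanishes and there is no divergence left. You say the $\log^2$ factor ``must be tracked'' but give no mechanism producing it. The paper obtains it by testing the maximal operator at $n_k\sim\log M_{2n_k}$ \emph{different} indices $n=M_{2n_k}+M_{2s}$, $s=1,\dots,n_k$, and localizing on the disjoint annuli $I_{2s}\setminus I_{2s+1}$: using $D_{j+M_{2n_k}}=D_{M_{2n_k}}+\psi_{M_{2n_k}}D_j$ one gets
\begin{equation*}
T_{M_{2n_k}+M_{2s}}f_{n_k}=\frac{1}{Q_{M_{2n_k}+M_{2s}}}\sum_{j=0}^{M_{2s}-1}q_{j+M_{2n_k}}\psi_{M_{2n_k}}D_j ,
\end{equation*}
and on $I_{2s}\setminus I_{2s+1}$ each $D_j(x)=j$, so $|T_{M_{2n_k}+M_{2s}}f_{n_k}|\ge cM_{2s}^2/M_{2n_k}$ there by \eqref{kachzcond1} resp.\ \eqref{kachzcond2}. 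Each annulus then contributes $\approx(M_{2s}^2)^{1/2}\cdot|I_{2s}\setminus I_{2s+1}|\approx 1$ to $\int(\sup_n|T_nf_{n_k}|/\varphi(n))^{1/2}d\mu$, and summing the $n_k$ scales gives the factor $n_k$, whence $n_k^2\sim\log^2 M_{2n_k+1}$ after squaring --- exactly matching condition \eqref{conddato1}. Without this multi-scale accumulation the endpoint case cannot be closed, so your proposal as written does not prove part (a).
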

\begin{proof} 
According to condition \eqref{conddato1} in case a) we conclude that there exists an increasing sequence $\left\{ n_{k}:k\in \mathbf{N}\right\} $ 
of positive integers such that
\begin{equation*}
\lim\limits_{k\rightarrow \infty }\frac{\log^2 \left(M_{2n_k+1}\right)}{\varphi \left( M_{2n_{k}+1}\right)}=+\infty.
\end{equation*}

According to condition \eqref{conddato2} we conclude that there exists an increasing sequence $\left\{ n_{k}:k\in \mathbf{N}\right\} $ 
 of positive integers such that (Here we use same indexes $n_{k}$, but it could be different)
\begin{equation*}
\lim\limits_{k\rightarrow \infty }\frac{\left(M_{2n_{k}}+2\right)^{ 1/p-2}}{
	\varphi \left( M_{2n_k}+2\right) }=+\infty, \ \ \ \text{for } \ \ \ 0<p<1/2.
\end{equation*}

Let
\begin{equation*}
f_{n_k}\left(x\right):=D_{M_{2n_k+1}}\left( x\right)-D_{M_{2n_k}}\left( x\right) .
\end{equation*}%
It is evident that

\begin{equation*}
\widehat{f}_{n_k}\left( i\right) =\left\{
\begin{array}{l}
1,\,\,\text{if\thinspace \thinspace \thinspace }
i=M_{2n_{k}},...,M_{2n_{k}+1}-1, \\
0,\,\,\text{otherwise.}
\end{array}
\right.
\end{equation*}%
and
\begin{equation}  \label{7sn}
S_{i}f_{n_{k}}\left( x\right) =\left\{
\begin{array}{ll}
D_{i}\left( x\right) -D_{M_{2n_{k}}}\left( x\right) , &
i=M_{2n_{k}}+1,...,M_{2n_{k}+1}-1, \\
f_{n_{k}}\left( x\right) , & i\geq M_{2n_{k}+1}, \\
0, & \text{otherwise.}%
\end{array}%
\right. 
\end{equation}
Since
\begin{equation*}
f_{n_k}^{\ast }\left( x\right) =\sup\limits_{n\in \mathbf{N}}\left\vert
S_{M_n}\left( f_{n_{k}};x\right) \right\vert =\left\vert f_{n_{k}}\left(
x\right) \right\vert,
\end{equation*}
we get
\begin{equation} \label{8}
\left\Vert f_{n_k}\right\Vert _{H_p}=\left\Vert f_{n_k}^{\ast
}\right\Vert_p=\left\Vert D_{M_{2n_k}}\right\Vert_p=M^{
1-1/p }_{2n_k}.  
\end{equation}

First, for case a) we consider  $p=1/2.$ By using (\ref{7sn}) and equality (see \cite{AVD})
$$D_{n}\left( x\right) =D_{M_{\left\vert n\right\vert }}\left(
x\right) +r_{\left\vert n\right\vert }\left( x\right) D_{n-M_{\left\vert n\right\vert }}\left( x\right)$$
for $1 \leq s \leq  n_k$ we get that  
\begin{eqnarray*}
	\frac{\left\vert T_{M_{2n_{k}}+M_{2s}}f_{n_k} \right\vert
	}{\varphi \left( M_{2n_{k}}+M_{2s}\right) } 
	&=&\frac{1}{\varphi \left( M_{2n_{k}}+M_{2s}\right) }\frac{1}{Q_{M_{2n_{k}}+M_{2s}}}\left\vert
	\sum\limits_{j=0}^{M_{2n_{k}}+M_{2s}-1}q_jS_{j}f_{n_{k}}
	\right\vert \\
	&=&\frac{1}{\varphi \left( M_{2n_{k}}+M_{2s}\right) }\frac{1}{Q_{M_{2n_{k}}+M_{2s}}}\left\vert
	\sum\limits_{j=M_{2n_{k}}}^{M_{2n_{k}}+M_{2s}-1}
	q_jS_{j}f_{n_{k}} \right\vert \\
	&=&\frac{1}{\varphi \left(M_{2n_{k}}+M_{2s}\right) }\frac{1}{Q_{M_{2n_{k}}+M_{2s}}}\left\vert
	\sum\limits_{j=M_{2n_{k}}}^{M_{2n_{k}}+M_{2s}-1}
	q_j\left( D_{j} -D_{M_{2n_{k}}} \right) \right\vert \\
	&=&\frac{1}{\varphi \left(M_{2n_{k}}+M_{2s}\right) }\frac{1}{Q_{M_{2n_{k}}+M_{2s}}}\left\vert
	\sum\limits_{j=0}^{M_{2n_{k}}-1}
	q_{j+M_{2n_{k}}}\left( D_{j+M_{2n_{k}}}-D_{M_{2n_{k}}} \right) \right\vert \\
	&=&\frac{1}{\varphi \left( M_{2n_{k}}+M_{2s}\right) }\frac{1}{Q_{M_{2n_{k}}+M_{2s}}}\left\vert
	\sum\limits_{j=0}^{M_{2s}-1}q_{j+M_{2m_k}}D_{j}  \right\vert 
\end{eqnarray*}

Let 
$x\in I_{2s}\setminus I_{2s+1}.$
Then
\begin{eqnarray*}
	&&\frac{\left\vert T_{M_{2n_{k}}+M_{2s}}f_{n_k}\left( x\right) \right\vert}{\varphi \left( M_{2n_{k}}+M_{2s}\right) } =\frac{1}{\varphi \left( M_{2n_{k}}+M_{2s}\right) }\frac{1}{Q_{M_{2n_{k}}+M_{2s}}}\left\vert
	\sum\limits_{j=0}^{M_{2n_{k}}-1}q_{j+M_{2n_k}}j \right\vert 
\end{eqnarray*}
Let sequence $\{q_{k}:k\geq 0\}$ is nondecreasing. Then according to condition \eqref{kachzcond1} we find that
\begin{eqnarray*}
	\frac{\left\vert T_{M_{2n_{k}}+M_{2s}}f_{n_k}\left( x\right) \right\vert}{\varphi \left( M_{2n_{k}}+M_{2s}\right) } 
	&\geq&\frac{1}{\varphi \left(M_{2n_{k}}+M_{2s}\right) }\frac{q_0}{Q_{M_{2n_{k}}+M_{2s}}}
	\sum\limits_{j=0}^{M_{2s}-1}j\\
	&\geq&\frac{1}{\varphi \left( M_{2n_{k}+1}\right) }\frac{q_0}{Q_{M_{2n_{k}}+M_{2s}}}
	\sum\limits_{j=0}^{M_{2s}-1}j \geq \frac{cM^2_{2s}}{M_{2n_{k}}\varphi \left( M_{2n_{k}+1}\right)}.
\end{eqnarray*}
Let sequence $\{q_{k}:k\geq 0\}$ is nonincreasing. Since $\varphi : \mathbf{N}_{+}\rightarrow [1,$ $\infty )$  is increasing sequence, by using condition \eqref{kachzcond2} we get that
\begin{eqnarray*}
	\frac{\left\vert T_{M_{2n_{k}}+M_{2s}}f_{n_k}\left( x\right) \right\vert}{\varphi \left( M_{2n_{k}}+M_{2s}\right) } 
	\geq\frac{1}{\varphi \left( M_{2n_{k}}+M_{2s}\right) }\frac{q_{M_{2n_{k}}+M_{2s}-1}}{Q_{M_{2n_{k}}+M_{2s}}}
	\sum\limits_{j=0}^{M_{2s}-1}j
	\geq  \frac{cM^2_{2s}}{M_{2n_{k}}\varphi \left( M_{2n_{k}+1}\right)}.
\end{eqnarray*}
Hence,
\begin{eqnarray*}
&&\underset{G_m}{\int }\left( \sup_{n\in \mathbf{N}}\frac{\vert T_{n}f_{n_k}\vert }{\varphi \left( n\right) } \right) ^{1/2}d\mu \geq \sum_{s=1}^{n_{k}}\underset{I_{2s}\setminus I_{2s+1}}{\int }\left| \frac{T_{M_{2n_{k}}+M_{2s}}f_{n_k}}{\varphi(M_{2n_{k}}+M_{2s})} \right| ^{1/2}d\mu \\ 
&\geq& \overset{n _{k}}{\underset{s=1}{\sum}}\underset{I_{2s}\setminus I_{2s+1}}{\int }\left(\frac{cM^2_{2s}}{M_{2n_{k}}\varphi \left( M_{2n_{k}+1}\right)}\right)^{1/2}d\mu 
\geq \frac{c}{\left(M_{2n_{k}}\varphi \left( M_{2n_{k}+1}\right)\right)^{1/2}}\sum_{s=1}^{n _{k}}M_{2s} \vert I_{2s}\setminus I_{2s+1} \vert  \\
&\geq&  \frac{c}{\left(M_{2n_{k}}\varphi \left( M_{2n_{k}+1}\right)\right)^{1/2}}\sum_{s=1}^{n_{k}} 1\geq  \frac{cn_{k}}{\left(M_{2n_{k}}\varphi \left( M_{2n_{k}+1}\right)\right)^{1/2}}.
\end{eqnarray*}

From (\ref{8}) we get that
\begin{eqnarray*}
\frac{\left( \int_{G_m}\left( \sup_{n\in \mathbf{N}}\frac{ \vert T_{n}f_{n_k}\vert }{\varphi \left( n\right) } \right) ^{1/2}d\mu  \right) ^{2}}{\left\| f_{n_{k}}\right\| _{H_{1/2}}} &\geq& \frac{cn^2_{k}M_{2n_{k}}}{M_{2n_{k}}\varphi \left( M_{2n_{k}+1}\right)}\geq \frac{cn^2_{k}}{\varphi \left( M_{2n_{k}+1}\right)}\\
&\geq& \frac{c{\left(2n_k+1\right)}^2}{\varphi \left( M_{2n_{k}+1}\right)}
\geq \frac{c\log^2 \left(M_{2n_k+1}\right)}{\varphi \left( M_{2n_{k}+1}\right)}
\to \infty, \ \ \text{as} \ \ k\to \infty.
\end{eqnarray*}
This complete proof of part a).

Next, we consider case $0<p<1/2.$ In the view of identities \eqref{7sn} of Fourier coefficients we find that
\begin{eqnarray*}
	\frac{\left\vert T_{M_{2n_{k}}+2}f_{n_k} \right\vert
	}{\varphi \left( M_{2n_{k}}+2\right) } 
	&=&\frac{1}{\varphi \left( M_{2n_{k}}+2\right) }\frac{1}{Q_{M_{2n_{k}}+2}}\left\vert
	\sum\limits_{j=0}^{M_{2n_{k}}+1}q_jS_{j}f_{n_{k}}
	\right\vert \\
	&=&\frac{1}{\varphi \left(M_{2n_{k}}+2\right) }\frac{1}{Q_{M_{2n_{k}}+2}}\left\vert
	q_{M_{2n_{k}}+1}\left( D_{M_{2n_{k}}+1}-D_{M_{2n_{k}}} \right) \right\vert \\
	&=&\frac{1}{\varphi \left(M_{2n_{k}}+2\right) }\frac{1}{Q_{M_{2n_{k}}+2}}\left\vert
	q_{M_{2n_{k}}+1}\psi_{M_{2n_{k}}}  \right\vert \\
	&=&\frac{1}{\varphi \left(M_{2n_{k}}+2\right) }\frac{q_{M_{2n_{k}}+1}}{Q_{M_{2n_{k}}+2}}
\end{eqnarray*}

Let sequence $\{q_{k}:k\geq 0\}$ is nondecreasing. Then
\begin{eqnarray*}
\frac{\left\vert T_{M_{2n_{k}}+2}f\left( x\right) \right\vert}{\varphi \left( M_{2n_{k}}+2\right) }\geq
\frac{1}{\varphi \left(M_{2n_{k}}+2\right) }\frac{q_{M_{2n_{k}}+1}}{q_{M_{2n_{k}}+1}\left(M_{2n_{k}}+2\right)}
\geq \frac{c}{M_{2n_{k}}\varphi \left(  M_{2n_{k}}+2\right) }
\end{eqnarray*}
Let sequence $\{q_{k}:k\geq 0\}$ is nonincreasing. Then, according condition \eqref{kachzcond2}  we find that
\begin{eqnarray*}
	\frac{\left\vert T_{M_{2n_{k}}+2}f\left( x\right) \right\vert
	}{\varphi \left( M_{2n_{k}}+2\right) } 
	=\frac{1}{\varphi \left(M_{2n_{k}}+2\right) }\frac{q_{M_{2n_{k}}+1}}{Q_{M_{2n_{k}}+2}}\geq \frac{c}{M_{2n_{k}}\varphi \left(  M_{2n_{k}}+2\right) }
\end{eqnarray*}
Hence,
\begin{eqnarray*}
	&&\mu \left\{ x\in G_m:\frac{\left\vert T_{M_{2n_{k}}+2}f\left(x\right) \right\vert }{\varphi \left(  M_{2n_{k}}+2\right)}
	\geq\frac{c}{M_{2n_{k}}\varphi\left(M_{2n_{k}}+2\right)}\right\}=\vert G_m\vert=1.
\end{eqnarray*}
Then from (\ref{8}) we get that
\begin{eqnarray*}
	&&\frac{\frac{c}{M_{2n_{k}}\varphi \left(M_{2n_{k}}+2\right)}\left\{ \mu
		\left\{ x\in G_m:\frac{\left\vert T_{ M_{2n_{k}}+2}f_{n_{k}}\left(x\right) \right\vert }{\varphi \left(  M_{2n_{k}}+2\right) }\geq \frac{c}{M_{2n_{k}}\varphi \left(  M_{2n_{k}}+2\right) }\right\} \right\} ^{1/p}}{
		\left\Vert f_{n_{k}}\right\Vert _{H_{p}}} \\
	&\geq &\frac{c}{M_{2n_{k}}\varphi \left( M_{n_{k}}+2\right) M_{2n_{k}}^{1-1/p}} =\frac{cM^{1/p-2}_{2n_{k} }}{\varphi\left( M_{2n_{k}}+2\right) }\\
	&\geq& \frac{c\left(M_{2n_{k} }+2\right)^{1/p-2}}{\varphi\left( M_{2n_{k}}+2\right) }
	\rightarrow \infty, \ \ \ \text{as }\ \ \ k\rightarrow
	\infty .
\end{eqnarray*}

The proof is complete.
\end{proof}

As application we get well-known result for the weighted maximal operator of Fejér means which was considered in Tephnadze \cite{tep2,tep3}:

\begin{corollary}
	Let $\varphi : \mathbf{N}_{+}\rightarrow [1,\infty )$  be any increasing
	function satisfying the conditions 
	$$\lim_{n\to\infty}\varphi(n)=\infty$$
	and
	\begin{equation}\label{cond2}
	\overline{\lim_{n\rightarrow \infty }}\frac{\left( n+1\right) ^{1/p-2}\log ^{2\left[ 1/2+p\right]}\left( n+1\right)}{\varphi(n)}=+\infty .  
	\end{equation}%
	Then
	\begin{equation*}
\frac{\left\Vert \sup_{n\in \mathbf{N}}\frac{	\vert\sigma_{n}f\vert}{
			\varphi {(n)}}\right\Vert_{1/2}}{\left\Vert
		f\right\Vert_{H_{1/2}}}=\infty 
	\end{equation*}	
and
	\begin{equation*}
\frac{\left\Vert \sup_{n\in \mathbf{N}}\frac{	\vert\sigma_{n}f\vert}{
\varphi {(n)}}\right\Vert_{{\text{weak}-L_p}}}{\left\Vert
		f\right\Vert_{H_p}}=\infty.
	\end{equation*}
\end{corollary}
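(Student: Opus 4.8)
The plan is to read off the Corollary from Theorem~\ref{Thtmeansdiv2} by observing that the Fej\'er means $\sigma_n$ are precisely the $T$ means generated by the constant sequence $q_k\equiv 1$, so that $Q_n=n$. First I would verify that this sequence meets the hypotheses of the theorem. It is non-decreasing, and condition \eqref{kachzcond1} holds with $c=1/2$ for every $n$, since $q_0/Q_{M_{2n_k}+2}=1/(M_{2n_k}+2)\ge 1/(2M_{2n_k})$ (using only $M_{2n_k}\ge 2$); for part b) the mere fact that $\{q_k\}$ is non-decreasing already suffices. Thus both parts of Theorem~\ref{Thtmeansdiv2} are applicable to $\sigma_n$.

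Next I would specialise condition \eqref{cond2}. When $p=1/2$ we have $1/p-2=0$ and $\left[1/2+p\right]=1$, so \eqref{cond2} says $\overline{\lim}_{n\to\infty}\log^2(n+1)/\varphi(n)=+\infty$; when $0<p<1/2$ we have $\left[1/2+p\right]=0$, so \eqref{cond2} says $\overline{\lim}_{n\to\infty}(n+1)^{1/p-2}/\varphi(n)=+\infty$. The only genuinely non-trivial point is to match these with conditions \eqref{conddato1} and \eqref{conddato2}, which carry $\varphi(n+1)$ in the denominator. For this I would pass to a subsequence $\{n_j\}$ realising the limit superior, replace $n_j$ by $n_j-1$, and use that $\varphi$ is increasing together with $\log^2 n/\log^2(n+1)\to 1$ and $\bigl(n/(n+1)\bigr)^{1/p-2}\to 1$; this shows that \eqref{cond2} is in fact equivalent to the relevant hypothesis of Theorem~\ref{Thtmeansdiv2}. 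The extra requirement $\lim_{n\to\infty}\varphi(n)=\infty$ needed in part a) is assumed in the Corollary.

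Finally I would conclude. For $p=1/2$, Theorem~\ref{Thtmeansdiv2}a) supplies a martingale $f\in H_{1/2}$ with $\|\sup_{n}|\sigma_n f|/\varphi(n)\|_{1/2}=\infty$, and division by the finite positive number $\|f\|_{H_{1/2}}$ yields the first displayed identity. For $0<p<1/2$, Theorem~\ref{Thtmeansdiv2}b) supplies a martingale $f\in H_p$ with $\|\sup_{n}|\sigma_n f|/\varphi(n)\|_{\text{weak}-L_p}=\infty$, and division by $\|f\|_{H_p}<\infty$ yields the second. I expect no real obstacle: the substantive work is entirely contained in Theorem~\ref{Thtmeansdiv2}, and here one only has to check the elementary inequality \eqref{kachzcond1} for $q_k\equiv 1$ and carry out the routine index-shift reconciliation between \eqref{cond2} and \eqref{conddato1}--\eqref{conddato2}.
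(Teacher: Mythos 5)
Your proposal is correct and is exactly the route the paper intends: the Corollary is stated as an immediate application of Theorem~\ref{Thtmeansdiv2} to the constant sequence $q_k\equiv 1$ (so $\sigma_n=T_n$, $Q_n=n$), and your verification of \eqref{kachzcond1} with $c=1/2$, the evaluation of $[1/2+p]$ in the two ranges of $p$, and the index-shift reconciliation of $\varphi(n)$ versus $\varphi(n+1)$ supply precisely the routine details the paper leaves unwritten.
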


We also present some new results on $T$ means with respect to Vilenkin systems which follows Theorem \ref{Thtmeansdiv2}:

\begin{corollary}
Theorem \ref{Thtmeansdiv2}  holds  true for $U_n^{\alpha}f,$	$V_n^{\alpha}f$ and $B_nf$ means with respect to Vilenkin systems.
\end{corollary}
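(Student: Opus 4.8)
The plan is simply to verify that each of the means $U_n^{\alpha}$, $V_n^{\alpha}$ ($0<\alpha<1$) and $B_n$ is a $T$ mean whose coefficient sequence $\{q_k\}$ is monotone and obeys the appropriate one of the side conditions \eqref{kachzcond1}, \eqref{kachzcond2}; once this is checked, both divergence statements of Theorem \ref{Thtmeansdiv2} apply verbatim to the corresponding $T$ mean, which is exactly the assertion of the corollary.

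For $B_n$ the coefficients are, by definition, nondecreasing and bounded, so $q_0>0$ and $q_k\le C<\infty$ for all $k$. In part b), where $0<p<1/2$, a nondecreasing sequence requires no further hypothesis and Theorem \ref{Thtmeansdiv2}\,b) applies at once. In part a) one has to produce \eqref{kachzcond1}; but $Q_{M_{2n_k}+2}=\sum_{k=0}^{M_{2n_k}+1}q_k\le (M_{2n_k}+2)C\le 3CM_{2n_k}$, so $q_0/Q_{M_{2n_k}+2}\ge (q_0/3C)/M_{2n_k}$, which is \eqref{kachzcond1} with $c=q_0/(3C)$.

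For $U_n^{\alpha}$ and $V_n^{\alpha}$ the coefficients are nonincreasing: for $V_n^{\alpha}$ because $q_k=k^{\alpha-1}$ with $\alpha-1<0$, and for $U_n^{\alpha}$ because $A_k^{\alpha-1}/A_{k-1}^{\alpha-1}=(k+\alpha-1)/k<1$; hence only \eqref{kachzcond2} needs checking, and the same condition serves both part a) and part b). Using the Ces\`aro identity $\sum_{k=0}^{N}A_k^{\alpha-1}=A_N^{\alpha}$ together with the two-sided estimate $A_N^{\beta}\asymp N^{\beta}$ for fixed $\beta$ (and, for $V_n^{\alpha}$, $\sum_{k=1}^{N}k^{\alpha-1}\le 1+\int_{1}^{N}x^{\alpha-1}\,dx\le 2N^{\alpha}/\alpha$), one gets
\[
\frac{q_{M_{2n_k}+1}}{Q_{M_{2n_k}+2}}\asymp\frac{(M_{2n_k})^{\alpha-1}}{(M_{2n_k})^{\alpha}}=\frac{1}{M_{2n_k}},
\]
so \eqref{kachzcond2} holds with a suitable constant $c=c(\alpha)$. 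Two small bookkeeping points remain, and these are the only delicate part of the argument: the definition of $U_n^{\alpha}$ uses the normalizer $A_n^{\alpha}$ instead of $Q_n=A_{n-1}^{\alpha}$, and the sequence of $V_n^{\alpha}$ has $q_0=0$. Since $A_{n-1}^{\alpha}/A_n^{\alpha}=n/(n+\alpha)$ stays in $[1/(1+\alpha),1)$, and the value $q_0=0$ never enters \eqref{kachzcond2} (it only lowers the denominator $Q_{M_{2n_k}+2}$, which strengthens the inequality) nor disturbs monotonicity on the large range of indices actually used in the proof of Theorem \ref{Thtmeansdiv2}, neither discrepancy affects the conclusion — at worst the constants in the estimates of that theorem change. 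This proves the corollary.
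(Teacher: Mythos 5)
Your verification is correct and is exactly what the corollary requires: the paper states it without proof, and the intended argument is precisely the routine check that $B_n$ has nondecreasing bounded coefficients satisfying \eqref{kachzcond1}, while $U_n^{\alpha}$ and $V_n^{\alpha}$ have nonincreasing coefficients with $q_{M_{2n_k}+1}/Q_{M_{2n_k}+2}\asymp 1/M_{2n_k}$, i.e.\ \eqref{kachzcond2}. Your two bookkeeping remarks (the normalizer $A_n^{\alpha}$ versus $A_{n-1}^{\alpha}$, and $q_0=0$ for $V_n^{\alpha}$ — note that with the paper's convention $A_0^{\alpha}:=0$ the same remark applies to $U_n^{\alpha}$) are handled correctly and only affect constants.
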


\end{document}